\newtheorem{theorem}{Theorem}
\theoremstyle{plain}
\newtheorem{corollary}[theorem]{Corollary}
\newtheorem{definition}[theorem]{Definition}
\newtheorem{example}[theorem]{Example}
\newtheorem{lemma}[theorem]{Lemma}
\numberwithin{equation}{section}
\numberwithin{equation}{section}
\numberwithin{theorem}{section}
\begin{document}
\title[Pseudoperiodicity and the $3x+1$ Conjugacy function]{Pseudoperiodicity and the $3x+1$ Conjugacy function}
\author{Jonathan Yazinski}
\address{Dept. of Mathematics and Statistics\\
McMaster University\\
1280 Main Street West\\
Hamilton, ON L8S 4K1}
\email{yazinski@math.mcmaster.ca}

\begin{abstract}
The $3x+1$ function $T:\mathbb{N}\rightarrow\mathbb{N}$ is defined by
$T\left(  x\right)  =\frac{3x+1}{2}$ for $x$ odd and $T\left(  x\right)
=\frac{x}{2}$ for $x$ even. The function $T$ has a natural extension to the
$2$-adic integers $\mathbb{Z}_{2}$ and there is a continuous function $\Phi$
which conjugates $T$ to the $2$-adic shift map $\sigma.$ Bernstein and
Lagarias conjectured that $-1$ and $\frac{1}{3}$ are the only odd fixed points
of $\Phi.$ In this paper we investigate periodicity associated with $\Phi$, a
property of the map which is a natural extention of solenoidality. We use it
to show that there are nontrivial infinite families of $2$-adics that are not
fixed points of $\Phi.$ In particular, we prove that three sequences of
farPoints of $2$-adics are finitely pseudoperiodic, providing more evidence
supporting the $\Phi$ Fixed Point Conjecture.

\end{abstract}
\maketitle

\section{Introduction}

Let $\mathbb{Z}_{2}$ denote the ring of $2$-adic integers. If $x\in
\mathbb{Z}_{2}$ with $x=\underset{i=0}{\overset{\infty}{\sum}}d_{i}2^{i}$ with
$d_{i}\in\left\{  0,1\right\}  $, we write $x=d_{0}d_{1}d_{2}\cdots.$ Define
the $3x+1$ function $T:\mathbb{Z}_{2}\rightarrow\mathbb{Z}_{2}$ as follows.
\[
T\left(  x\right)  =\left\{
\begin{tabular}
[c]{ll}
$\frac{3x+1}{2}$ & if $x$ is odd\\
$\frac{x}{2}$ & if $x$ is even
\end{tabular}
\ \right.
\]
The famous $3x+1$ conjecture states that for every $x\in\mathbb{N}^{+}$ there
is some$\;k\in\mathbb{N}^{+}$ such that the $k$-th iterate $T^{k}\left(
x\right)  =1$, i.e. the conjecture asserts that the $T$-orbit of every
positive integer will eventually enter the cycle $\left\{  1,2\right\}  $.
Define the $2$-adic shift map $\sigma:\mathbb{Z}_{2}\rightarrow\mathbb{Z}_{2}$
by
\[
\sigma\left(  x\right)  =\left\{
\begin{tabular}
[c]{ll}
$\frac{x-1}{2}$ & if $x$ is odd\\
$\frac{x}{2}$ & if $x$ is even
\end{tabular}
\ \right.
\]
so that $\sigma\left(  d_{0}d_{1}\cdots\right)  =d_{1}d_{2}\cdots.$ Hence
$\sigma$ just removes the first digit of a $2$-adic integer\footnote{$\sigma$
is called $S$ in \cite{BL}}. There is a continuous bijective map
$\Phi:\mathbb{Z}_{2}\rightarrow\mathbb{Z}_{2}$ such that the following diagram
commutes:
\begin{equation}
\begin{tabular}
[c]{ccc}
$\mathbb{Z}_{2}$ & $\overset{\sigma}{\longrightarrow}$ & $\mathbb{Z}_{2}$\\
$\Phi\downarrow\;\;$ &  & $\Phi\downarrow\;\;$\\
$\mathbb{Z}_{2}$ & $\overset{T}{\longrightarrow}$ & $\mathbb{Z}_{2}$
\end{tabular}
\ \label{PhiCommute}
\end{equation}
so that $\Phi\circ\sigma=T\circ\Phi\;$\cite{L}$.$ The map $\Phi$ is
\textbf{solenoidal}, that is, $x\underset{2^{n}}{\equiv}y\Rightarrow
\Phi\left(  x\right)  \underset{2^{n}}{\equiv}\Phi\left(  y\right)  $
\cite{BL} (where $\underset{2^{n}}{\equiv}$ denotes congruence
$\operatorname{mod}2^{n}$). In fact, $\Phi$ induces permutations on
$\mathbb{Z}\diagup2^{n}\mathbb{Z}.$

An explicit formula for $\Phi$ was discovered by Bernstein \cite{B}, but a
convenient one is not known for its inverse. If $x=\underset{i}{\sum}2^{d_{i}
},$ with $0\leq d_{0}<d_{1}<\cdots,$ then
\begin{equation}
\Phi\left(  x\right)  =-\underset{i}{\sum}2^{d_{i}}3^{-i}\label{BrnPhiFormula}
\end{equation}
It follows from the conjugacy in (\ref{PhiCommute}) that the inverse map is
given by
\[
\Phi^{-1}\left(  x\right)  =\underset{i=0}{\overset{\infty}{\sum}}\left(
T^{i}\left(  x\right)  \;Mod\;2\right)  2^{i}
\]
and we say that $\Phi^{-1}$ maps a $2$-adic to its \textit{parity
vector}\footnote{As given in \cite{BL}, the formula for $\Phi^{-1}\left(
x\right)  $ uses $\operatorname{mod}$ instead of $Mod,$ but throughout the
course of this paper we will keep the convention that for $z\in\mathbb{Z}_{2}$
and $k\in\mathbb{N},$ $z\;Mod\;2^{k}$ will be the least $m\in\mathbb{N}$ such
that $m\underset{2^{k}}{\equiv}z.$}.

Many of the interesting properties of the $3x+1$ conjugacy function do not
lead to an apparent solution to the $3x+1$ problem \cite{BL}. The map $\Phi$
seems to inherit much of the problematic nature of $T,$ and the authors of
\cite{BL} have proposed the seemingly equally intractable

\bigskip

$\Phi$\textbf{\ Fixed Point Conjecture:}\ The only odd fixed points of $\Phi$
are $-1$ and $\frac{1}{3}.$

\bigskip

From (\ref{BrnPhiFormula}), we see that $\Phi\left(  2^{k}x\right)  =2^{k}
\Phi\left(  x\right)  $ so that every even fixed point of $\Phi$ is either $0$
or $2^{k}x$, where $x$ is an odd fixed point. There is another more general
conjecture proposed in \cite{BL} stating that for any given $2$-power, $\Phi$
will have finitely many odd periodic points whose period is that $2$-power.

\bigskip

\section{Main Results}

We now provide evidence supporting the $\Phi$ Fixed Point Conjecture by
demonstrating a periodicity associated with $\Phi$ and a related property
which we will call pseudoperiodicity. Solenoidality allows us to study the
action of $\Phi$ on the first $k$ digits of a $2$-adic; however, periodicity
will allow us to look at beginning of the infinite ``tail''\ end. First, we
define what it means for a sequence to be pseudoperiodic.

\begin{definition}
\label{defPseudo}Let $X$ be a set, and let $\left\{  a_{n}\right\}  $ be a
sequence (where $a:\mathbb{N}\rightarrow X$). For all $n\in\mathbb{N}$ we say
that the $n$-th term $a_{n}$ is \textbf{pseudoperiodic} if
\[
\exists m\in\mathbb{N\;\forall}r\mathbb{\in N}^{+}\;a_{n+mr}=a_{n}
\]
We say that the sequence $\left\{  a_{n}\right\}  $ is pseudoperiodic if all
of its terms are pseudoperiodic.
\end{definition}

We will also say that the sequence $\left\{  a_{n}\right\}  _{n\geq k}$ is
pseudoperiodic if we replace ``For all $n\in\mathbb{N}$''\ in the above
definition with ``For all $n\geq k$''. It will be convenient in some
applications to take $X\cup\left\{  \infty\right\}  ,$ where $X\subseteq
\mathbb{R}$ to be the codomain of $a$ and say that in this case $\left\{
a_{n}\right\}  $ is \textbf{finitely pseudoperiodic} when all terms $a_{n}
\neq\infty$ are pseudoperiodic.

Hence a sequence is pseudoperiodic if for every term in the sequence, there is
some period $m$ with which that term will repeat in the sequence. For example,
every periodic sequence is pseudoperiodic. For a somewhat less trivial
example, consider $b:\mathbb{N}\rightarrow\mathbb{N}$, where $b_{n}$ is
defined to be the smallest positive prime dividing $n+2.$ The
pseudoperiodicity of this sequence is reminiscent of the sieve of Eratosthenes.

Because $\Phi$ is solenoidal, ``the action of $\Phi$ on the first $k$ digits
of a $2$-adic'' is well defined. That is, the first $k$ digits of $\Phi\left(
x\right)  $ are completely determined by the first $k$ digits of $x.$
Therefore we can classify non-fixed points by their congruence class
$\operatorname{mod}2^{k}.$ In fact, we will say that $x$\textbf{\ is a fixed
point of }$\Phi\operatorname{mod}2^{k}$ when $\Phi\left(  x\right)
\underset{2^{k}}{\equiv}x.$ Consequently, whenever $x$ is a fixed point
$\operatorname{mod}2^{k},$ we can ask how many more digits $r$ are needed so
that there is no fixed point $\operatorname{mod}2^{k+r}$ having the same first
$k$ digits as $x$. This motivates the following definition.

\begin{definition}
For $k\in\mathbb{N}$ and $x\in\mathbb{N}\subseteq\mathbb{Z}_{2}$ with
$x<2^{k},$ define
\[
\operatorname{fP}\left(  x,k\right)  =\min\left\{  r\in\mathbb{N}\mid\forall
z\in\mathbb{Z}_{2},\;z\underset{2^{k}}{\equiv}x\Rightarrow\Phi\left(
z\right)  \underset{2^{k+r}}{\not \equiv }z\right\}
\]
when the minimum exists, and $\infty$ otherwise.
\end{definition}

That is, $\operatorname{fP}\left(  x,k\right)  $ is the smallest number of
digits $r$ needed so that no $2$-adic agreeing with $x$ on the first $k$
digits will be a fixed point of $\Phi\operatorname{mod}2^{k+r}$. We call
$\operatorname{fP}\left(  x,k\right)  $ the \textbf{farPoint of }
$x$\textbf{\ on the first }$k$\textbf{\ digits}.

\begin{example}
If $\Phi\left(  x\right)  \underset{2^{k}}{\not \equiv }x$ then
$\operatorname{fP}\left(  x,k\right)  =0.$ The converse is also true.
\end{example}

\begin{example}
$\operatorname{fP}\left(  0,k\right)  =\infty$ for any $k,$ since $\Phi\left(
0\right)  =0.$
\end{example}

\begin{example}
Since $\Phi\left(  1\right)  \underset{2^{2}}{\equiv}1$ and $\Phi\left(
1\right)  \underset{2^{3}}{\equiv}5$ we have that $\operatorname{fP}\left(
1,2\right)  =1.$
\end{example}

For $x\in\mathbb{Z}_{2},$ denote $L_{k}\left(  x\right)  =x\;Mod\;2^{k}$ and
$R_{k}\left(  x\right)  =\sigma^{k}\left(  x\right)  .$ $L_{k}\left(
x\right)  $ and $R_{k}\left(  x\right)  $ will be called the \textbf{(}
$k$\textbf{-)left} and \textbf{(}$k$\textbf{-)right} parts of $x,$
respectively. Thus, if $x=d_{0}d_{1}\cdots d_{k-1}d_{k}\cdots,$ then
$L_{k}\left(  x\right)  =d_{0}d_{1}\cdots d_{k-1}$ and $R_{k}\left(  x\right)
=d_{k}d_{k+1}\cdots.$ So, for example, $L_{3}\left(  \frac{1}{3}\right)  =3$
and $R_{3}\left(  \frac{1}{3}\right)  =-\frac{1}{3},$ since $\frac{1}
{3}=1101010\cdots$ and $-\frac{1}{3}=101010\cdots.$ With these definitions we
are able to state our first result.

\begin{theorem}
\label{periocityOfPhi}Fix $H\geq3$ and $q\in\mathbb{Q}\cap\mathbb{Z}_{2}.$
Then the sequence $\left\{  L_{H}R_{n}\Phi L_{n}\left(  q\right)  \right\}  $
is eventually periodic.
\end{theorem}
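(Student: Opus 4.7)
The plan is to exploit the eventual periodicity of the $2$-adic expansion of any $q\in\mathbb{Q}\cap\mathbb{Z}_{2}$, together with Bernstein's formula \eqref{BrnPhiFormula} for $\Phi$. First I would fix $n_{0}$ and $p$ so that the digits of $q$ are purely periodic of period $p$ beyond position $n_{0}$, with exactly $k$ ones per block at fixed offsets $f_{0}<\cdots<f_{k-1}$, and let $N_{n}$ denote the number of $1$-digits among the first $n$ digits of $q$. Applying \eqref{BrnPhiFormula} to $L_{n}(q)$ and to $L_{n+p}(q)$ for $n\ge n_{0}$, the $k$ newly listed $1$-positions appear at $n+f_{j}$ with Bernstein indices $N_{n}+j$, yielding the key recursion
\[
\Phi L_{n+p}(q)-\Phi L_{n}(q)\;=\;2^{n}\cdot 3^{-N_{n}}\cdot C,\qquad C:=-\sum_{j=0}^{k-1}2^{f_{j}}3^{-j}\in\mathbb{Q}\cap\mathbb{Z}_{2}.
\]

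Next I would iterate this recursion. Setting $\alpha:=2^{p}3^{-k}\in 2\mathbb{Z}_{2}$, so that $1-\alpha\in\mathbb{Z}_{2}^{\times}$ and $\alpha^{j}\to 0$ in $\mathbb{Z}_{2}$, geometric summation gives
\[
\Phi L_{n+jp}(q)\;=\;\Phi L_{n}(q)+\frac{2^{n}C}{3^{N_{n}}}\cdot\frac{1-\alpha^{j}}{1-\alpha}.
\]
Since $\Phi$ is continuous and $L_{n+jp}(q)\to q$, the left side converges to $\Phi(q)$, and passing to the limit yields the exact identity
\[
\Phi(q)-\Phi L_{n}(q)\;=\;\frac{2^{n}C}{3^{N_{n}}(1-\alpha)},
\]
from which I immediately read off that $\Phi(q)\in\mathbb{Q}\cap\mathbb{Z}_{2}$; in particular the digit sequence of $\Phi(q)$ itself is eventually periodic.

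To finish, I would convert this into the desired statement. By solenoidality, $L_{n}(\Phi L_{n}(q))=L_{n}(\Phi(q))$, so $\Phi(q)-\Phi L_{n}(q)=2^{n}\bigl(R_{n}\Phi(q)-R_{n}\Phi L_{n}(q)\bigr)$, and canceling $2^{n}$ in the identity above gives
\[
R_{n}\Phi L_{n}(q)\;=\;R_{n}\Phi(q)-\frac{C}{3^{N_{n}}(1-\alpha)}.
\]
Both summands become eventually periodic in $n$ after reduction modulo $2^{H}$: the first because $\Phi(q)$ has eventually periodic $2$-adic digits, and the second because $3^{-N_{n}}\bmod 2^{H}$ depends only on $N_{n}$ modulo the order of $3$ in $(\mathbb{Z}/2^{H}\mathbb{Z})^{\times}$, while $\{N_{n}\}$ is eventually periodic modulo every fixed integer (it is a running count over an eventually periodic $\{0,1\}$-sequence). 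Applying $L_{H}$ to the sum then yields the claim.

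The main technical point I expect to handle carefully is the limit step: verifying that $\alpha\in 2\mathbb{Z}_{2}$, that $1-\alpha$ is a $2$-adic unit, and that continuity of $\Phi$ really does identify the $j\to\infty$ limit with $\Phi(q)$. After that closed-form identity is in hand, the remaining argument is routine modular arithmetic in $\mathbb{Z}/2^{H}\mathbb{Z}$, and the hypothesis $H\geq 3$ enters only through the standard computation of $\mathrm{ord}_{2^{H}}(3)$ used implicitly when reducing the second summand.
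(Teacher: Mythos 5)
Your argument is correct and arrives at the theorem by a route that overlaps with, but is not identical to, the paper's. The closed-form identity you obtain by telescoping and passing to the $2$-adic limit, $\Phi(q)-\Phi L_{n}(q)=2^{n}C/\bigl(3^{N_{n}}(1-\alpha)\bigr)$ with your $\alpha=2^{p}3^{-k}$, is exactly the content of Theorem \ref{infinitevsRational}: by Bernstein's formula one checks $C/(1-\alpha)=\Phi\bigl(\bar{b}_{v,\infty}\bigr)$, so your identity is the paper's with the geometric series already summed. The paper gets this in one line from the pseudohomomorphism property (Theorem \ref{pseudohomo}), whereas your derivation via recursion and continuity has the advantage of producing the rationality of $\Phi(q)$ as a byproduct rather than citing it. From there the two arguments diverge in emphasis: the paper reduces the $3$-power factor modulo $2^{H}$ via the explicit Lemma \ref{3power} and thereby exhibits a concrete period $p2^{H-2}v$ for the sequence --- a point that matters downstream, since the fact that the period is (a fixed multiple of) a $2$-power is what drives Theorems \ref{pseudo1} and \ref{pseudo2} and Corollary \ref{noFixedPointFamilies} --- while you argue more softly that $L_{H}R_{n}\Phi(q)$ and $3^{-N_{n}}\bmod 2^{H}$ are each eventually periodic in $n$ and combine; that is perfectly valid for the statement at hand and in fact needs no hypothesis on $H$. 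One detail to repair: your constant $C=-\sum_{j}2^{f_{j}}3^{-j}$ is independent of $n$ only when $n$ is aligned with a block boundary; for general $n\ge n_{0}$ the $k$ ones among positions $n,\dots,n+p-1$ occur at a cyclic rotation of the offsets, so the recursion really involves $C_{n\bmod p}$. This does not damage the proof, since $n\mapsto C_{n\bmod p}$ is itself periodic and the final sequence remains a sum of eventually periodic sequences modulo $2^{H}$, but the recursion and the limit identity should be stated with that dependence (or with $n$ restricted to each residue class modulo $p$ in turn).
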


We call the property of $\Phi$ exhibited in this theorem the
\textit{periodicity of }$\Phi.$ We can use this property to make progress on
the Fixed Point Conjecture by showing that certain sequences of farPoints are pseudoperiodic.

\begin{theorem}
\label{pseudo1}The sequence $\left\{  \operatorname{fP}\left(  2^{n}
-1,n+1\right)  \right\}  $ is finitely pseudoperiodic.
\end{theorem}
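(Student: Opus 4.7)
\textbf{Proof proposal for Theorem \ref{pseudo1}.} The plan is to combine the Bernstein formula (\ref{BrnPhiFormula}) with the elementary periodicity of $3^n$ modulo powers of two. In outline: (i) derive a closed form for $\Phi$ on every extension of $2^n - 1$ by further $2$-adic digits, (ii) reformulate the fixed-point condition as a single congruence in $\mathbb{Z}/2^r\mathbb{Z}$ whose $n$-dependence is confined to $3^n$, and (iii) extract pseudoperiodicity of the farPoint sequence from the resulting periodicity.

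For step (i), I would write $z = (2^n - 1) + 2^{n+1} w$ with $w = \sum_{j} 2^{e_j} \in \mathbb{Z}_2$. The $1$-digits of $z$ sit at positions $0, 1, \ldots, n - 1, n + 1 + e_0, n + 1 + e_1, \ldots$, so plugging into (\ref{BrnPhiFormula}), summing the $2$-adic geometric series $\sum_{i=0}^{n-1} (2/3)^i = 3 - 3(2/3)^n$, and using the identity $\sum_j 2^{e_j} 3^{-j} = -3\,\Phi(w)$ yield
\[
\Phi\bigl(2^n - 1 + 2^{n+1} w\bigr) \;=\; -1 + \frac{2^n}{3^n}\bigl(1 + 2\,\Phi(w)\bigr).
\]
For step (ii), subtracting $z$ and dividing through by $2^n$ reduces $\Phi(z) \equiv z \pmod{2^{n+1+r}}$ to
\[
\Phi(w) - 3^n w \;\equiv\; \frac{3^n - 1}{2} \pmod{2^r},
\]
which by solenoidality genuinely lives in $\mathbb{Z}/2^r\mathbb{Z}$. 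Hence $\operatorname{fP}(2^n - 1, n+1) \leq r$ is equivalent to the nonexistence of a solution $w$ to this congruence.

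For step (iii), the point is that $\Phi$ induces a fixed permutation of $\mathbb{Z}/2^r\mathbb{Z}$, while the entire $n$-dependence is through $3^n \bmod 2^r$ and $(3^n - 1)/2 \bmod 2^r$. Both are periodic in $n$ with period $\pi_r := \operatorname{ord}_{2^{r+1}}(3)$, a power of two. Consequently the predicate
\[
b_r(n) := \bigl[\,\exists\, w \in \mathbb{Z}/2^r\mathbb{Z}\,:\; \Phi(w) - 3^n w \equiv (3^n - 1)/2 \pmod{2^r}\,\bigr]
\]
is exactly periodic in $n$ with period $\pi_r$; this is the arithmetic shadow of Theorem \ref{periocityOfPhi} specialized to $q = -1$. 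Set $f(n) := \operatorname{fP}(2^n - 1, n + 1)$, the least $r$ with $b_r(n) = 0$. If $f(n_0) = r_0 < \infty$, putting $m := \operatorname{lcm}(\pi_1, \ldots, \pi_{r_0})$ ensures $b_r(n_0 + mj) = b_r(n_0)$ for every $j \geq 1$ and every $r \leq r_0$, forcing $f(n_0 + mj) = r_0$. This is exactly the pseudoperiodicity of $\{f(n)\}$ at every finite term.

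The one nonroutine step I anticipate is the derivation of the closed-form identity in (i): it demands careful tracking of signs and attention to $2$-adic convergence in the geometric series. Once that identity is in hand the remainder is elementary, because the $n$-dependence of the fixed-point equation has been compressed into a single scalar $3^n$ acting on a fixed permutation of $\mathbb{Z}/2^r\mathbb{Z}$, and $3$ has finite multiplicative order modulo every power of two.
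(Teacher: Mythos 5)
Your proposal is correct, and all three steps check out: the closed form in (i) agrees with what one gets from Theorem \ref{pseudohomo} together with Corollary \ref{infinitevs} (namely $\Phi(2^n-1+2^{n+1}w)=\Phi(2^n-1)+\tfrac{2^{n+1}}{3^n}\Phi(w)$ with $\Phi(2^n-1)=-1+\tfrac{2^n}{3^n}$), the reduction in (ii) to $\Phi(w)-3^nw\equiv\tfrac{3^n-1}{2}\pmod{2^r}$ is a clean and valid algebraic manipulation, and the periodicity argument in (iii) correctly converts periodicity of the predicates $b_r(n)$ into pseudoperiodicity of the least $r$ with $b_r(n)$ false. One caveat on step (i): your identity $\sum_j 2^{e_j}3^{-j}=-3\,\Phi(w)$ presupposes the normalization $\Phi(\sum_i 2^{d_i})=-\sum_i 2^{d_i}3^{-(i+1)}$, which is the one consistent with $\Phi(-1)=-1$ and with the paper's worked examples, even though the displayed formula (\ref{BrnPhiFormula}) literally has $3^{-i}$; you used the right version, but it is worth flagging.

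Your route is genuinely different in organization from the paper's, even though both ultimately rest on the same arithmetic fact (Lemma \ref{3power}, i.e.\ that $\operatorname{ord}_{2^k}(3)$ is a power of $2$) together with solenoidality and the pseudohomomorphism property. The paper never writes down a single congruence; instead it fixes $H\geq\max\{1,f-1\}$, pairs each $z=2^n-1+2^{n+1}y$ with $z'=2^{n+2^H}-1+2^{n+1+2^H}y$, and uses Corollary \ref{mainthm} to show $R_{n+2^H}\Phi(z')\equiv R_n\Phi(z)$ and $R_{n+2^H}(z')\equiv R_n(z)$ modulo $2^{H+2}$, then argues the two inequalities $\operatorname{fP}(2^{n+2^H}-1,n+1+2^H)\leq f$ and $\geq f$ separately. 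Your version compresses the entire $n$-dependence into the scalar $3^n$ acting in a fixed congruence over $\mathbb{Z}/2^r\mathbb{Z}$, which gets both inequalities in one stroke and makes the period transparent: $\pi_{f}=\operatorname{ord}_{2^{f+1}}(3)=2^{f-1}$ for $f\geq2$, matching the paper's $2^H$ with $H\geq f-1$ and its observation that the period is always a $2$-power (e.g.\ period $16$ for $\operatorname{fP}(2^{10}-1,11)=5$). What you lose is uniformity: the paper's detour through Theorem \ref{mainthmRational} is built to apply verbatim to any $q\in\mathbb{Q}_{odd}$, which is how it also yields Theorem \ref{pseudo2}; your explicit congruence would have to be rederived (with a slightly messier closed form) for the $\tfrac13$-family.
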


If the $\Phi$ Fixed Point Conjecture is true, then all terms in the above
sequence will be finite (and hence pseudoperiodic) except for when $n=0,2.$
Any odd $x\in\mathbb{Z}_{2}-\left\{  \frac{1}{3},-1\right\}  $ that is a fixed
point of $\Phi$ will agree with either $-1$ or $\frac{1}{3}$ on a finite
number of digits. Direct calculation shows that such a fixed point $x$ must
agree with one of these fixed points on at least its first $100$ digits.
Theorem \ref{pseudo1} corresponds to those potential fixed points whose
initial segments agree with $-1$ on a finite number of digits. We have an
analogous theorem for $\frac{1}{3}$:

\begin{theorem}
\label{pseudo2}The sequences $\left\{  \operatorname{fP}\left(  1+\underset
{i=0}{\overset{n}{\sum}}2^{2i+1},2n+4\right)  \right\}  $ and $\left\{
\operatorname{fP}\left(  1+\underset{i=0}{\overset{n}{\sum}}2^{2i+1}
+2^{2n+2},2n+3\right)  \right\}  $ are finitely psuedoperiodic.
\end{theorem}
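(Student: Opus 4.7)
The plan is to combine Bernstein's explicit formula (\ref{BrnPhiFormula}) with the periodicity result of Theorem \ref{periocityOfPhi} applied to $q = 1/3$. From (\ref{BrnPhiFormula}) one derives the basic decomposition
\[
\Phi\bigl(a + 2^{k} y\bigr) \;=\; \Phi(a) \;+\; 3^{-s}\cdot 2^{k}\cdot \Phi(y)
\qquad (a \in \mathbb{N},\ a < 2^k,\ y \in \mathbb{Z}_2),
\]
where $s$ is the number of nonzero binary digits of $a$. This identity cleanly isolates the contributions of the low and high bits to $\Phi$, and it is the engine of the proof.

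First I would apply the decomposition to a general lift $z = x_n + 2^{k_n} y$ in each of the two sequences. For the first sequence, $x_n = L_{2n+2}(1/3)$ (so $s_n = n+2$) and $k_n = 2n+4$, and substitution turns the condition $\Phi(z) \equiv z \pmod{2^{2n+4+r}}$ into
\[
v_n + 4\bigl(3^{-(n+2)}\Phi(y) - y\bigr) \;\equiv\; 0 \pmod{2^{r+2}},
\]
where $v_n \in \mathbb{Z}_2$ is defined by $\Phi(L_{2n+2}(1/3)) - L_{2n+2}(1/3) = 2^{2n+2}v_n$. For the second sequence, $x_n = L_{2n+2}(1/3) + 2^{2n+2}$, and one first computes $\Phi(x_n) = \Phi(L_{2n+2}(1/3)) - 3^{-(n+2)} \cdot 2^{2n+2}$ by applying the decomposition with $y=1$ and $\Phi(1) = -1$; this leads to the analogous congruence
\[
\bigl(v_n - 3^{-(n+2)} - 1\bigr) + 2\bigl(3^{-(n+3)}\Phi(y) - y\bigr) \;\equiv\; 0 \pmod{2^{r+1}}.
\]
In either case, $\operatorname{fP}(x_n, k_n)$ is the smallest $r$ for which the appropriate congruence fails to be solvable in $y \in \mathbb{Z}_2$.

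Next, observe that the low-order digits of $v_n$ form precisely the sequence $\{L_H R_{2n+2}\Phi L_{2n+2}(1/3)\}$, eventually periodic in $n$ by Theorem \ref{periocityOfPhi}, while $\{3^{-n} \bmod 2^H\}$ is periodic in $n$ with period dividing $2^{H-2}$ (the order of $3$ in $(\mathbb{Z}/2^H\mathbb{Z})^*$). Consequently, for each fixed $r$, all the data that determines solvability of the relevant congruence modulo $2^{r+O(1)}$ are eventually periodic in $n$ with an explicit joint period $m = m(r)$. Given any $n$ with $\operatorname{fP}(x_n, k_n) = r < \infty$, this $m$ witnesses pseudoperiodicity at $n$: the solvability pattern recurs at $n, n+m, n+2m, \ldots$, forcing $\operatorname{fP}(x_{n+mj}, k_{n+mj}) = r$ for every $j \geq 1$, which is the definition of pseudoperiodicity.

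The main obstacle is the bookkeeping for the second sequence, where $x_n$ is not itself a truncation $L_{k}(q)$ of any fixed rational, so Theorem \ref{periocityOfPhi} cannot be invoked on $x_n$ directly; one must peel off the spurious bit at position $2n+2$ via the decomposition formula and then apply the periodicity theorem to the underlying prefix $L_{2n+2}(1/3)$. A minor secondary point is that Theorem \ref{periocityOfPhi} delivers only \emph{eventual} periodicity of $v_n$, so for small $n$ inside the pre-period, pseudoperiodicity must be verified either by direct check or by enlarging $m$ to step past the pre-period.
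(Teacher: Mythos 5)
Your proposal follows essentially the same route as the paper: both rest on the decomposition $\Phi(a+2^{k}y)=\Phi(a)+3^{-\alpha(a)}2^{k}\Phi(y)$ (Theorem \ref{pseudohomo}), on the periodicity in $n$ of the tail $v_n=R_{2n+2}\Phi L_{2n+2}\left(\tfrac13\right)$, and on $3^{2^{n}}\underset{2^{n+2}}{\equiv}1$ (Lemma \ref{3power}). Your reformulation of ``$z$ is a fixed point $\operatorname{mod}2^{k_n+r}$'' as a congruence in $y$ whose solvability data is periodic in $n$ is a repackaging of the paper's correspondence $z=x_n+2^{k_n}y\mapsto z'=x_{n+2^{H+1}}+2^{k_{n+2^{H+1}}}y$ (same $y$), and your treatment of the second sequence --- peeling off the extra bit $2^{2n+2}$ before invoking periodicity on the prefix $L_{2n+2}\left(\tfrac13\right)$ --- is also what the paper does.

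The one point that needs repair is the pre-period. Pseudoperiodicity requires, for \emph{every} $n$ with finite farPoint, an $m$ such that the value recurs at $n+mj$ for all $j\geq1$. If $n$ lies inside the pre-period of an eventually periodic sequence, neither of your proposed remedies works: a ``direct check'' at such an $n$ is still an infinitary claim (it asserts equality at infinitely many later indices), and enlarging $m$ never forces the term at $n$ itself to agree with the terms at $n+mj$, only the later terms with one another. The fix is to bypass Theorem \ref{periocityOfPhi} and quote Theorem \ref{mainthmRational} directly with $q=\tfrac13=3+2^{2}\bar{2}_{2,\infty}$ (so $a=3$, $k=v=b=2$): since $\Phi\left(\tfrac13\right)=\tfrac13$ has $u=2=k$ and $p=2$, the hypothesis $k+tv\geq u$ holds for every $t\geq0$, so $v_n\;Mod\;2^{H+2}$ is genuinely periodic in $n$ with period $p2^{H}=2^{H+1}$ and no pre-period at all; combined with the exact periodicity of $3^{-(n+2)}\;Mod\;2^{H}$ this gives a period valid from $n=0$ on, which is how the paper proceeds. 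With that substitution your argument is complete, and the resulting period is a $2$-power, consistent with the paper's remark following Corollary \ref{noFixedPointFamilies}.
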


Note that $\frac{1}{3}=1+\underset{i=0}{\overset{\infty}{\sum}}2^{2i+1}.$
Knowing that a particular value of farPoint is finite in any of the three
sequences in Theorems \ref{pseudo1} and \ref{pseudo2} yields an infinite
family of non-fixed points of $\Phi.$ In particular, the above theorems imply

\begin{corollary}
\label{noFixedPointFamilies}If there are no fixed points of the form
$\overset{n\text{ ones}}{\overbrace{11\cdots1}0}\cdots$ (respectively,
$\overset{n\text{ ones}}{11\overbrace{010\cdots10}0}\cdots$ or $\overset
{n\text{ ones}}{11\overbrace{010\cdots1}1}\cdots$), then there will be some
$h$ such that for every $r\in N,$ there will be no fixed point of the form
$\overset{n+rh\text{ ones}}{\overbrace{1111\cdots1}0}\cdots$ (respectively,
$\overset{n+rh\text{ ones}}{11\overbrace{010\cdots10}}0\cdots$ or
$\overset{n+rh\text{ ones}}{11\overbrace{010\cdots1}}1\cdots$).
\end{corollary}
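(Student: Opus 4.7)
The strategy for all three forms is the same: use compactness to convert the hypothesis into finiteness of a farPoint, and then invoke the appropriate pseudoperiodicity result from \thmref{pseudo1} or \thmref{pseudo2} to propagate the absence of fixed points along an arithmetic progression of indices.

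First I would observe that $2$-adics of the form $\overbrace{11\cdots1}^{n}0\cdots$ are exactly those $z\in\mathbb{Z}_{2}$ with $z\equiv 2^{n}-1\pmod{2^{n+1}}$, and that the other two families correspond in the same way to the residues $1+\sum_{i=0}^{k}2^{2i+1}$ modulo $2^{2k+4}$ and $1+\sum_{i=0}^{k}2^{2i+1}+2^{2k+2}$ modulo $2^{2k+3}$ (after a harmless relabeling of the index $n$ in the corollary in terms of the index $k$ in the theorem). Denote the resulting clopen residue class by $A_n\subseteq\mathbb{Z}_{2}$. The hypothesis ``no fixed point of the given form'' says $\Phi(z)\neq z$ for every $z\in A_n$; since $\Phi$ is continuous and $A_n$ is compact, the continuous function $z\mapsto\Phi(z)-z$ has no zero on $A_n$ and is therefore bounded away from $0$ in the $2$-adic metric. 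Equivalently, the $2$-adic valuations $v_{2}(\Phi(z)-z)$ are uniformly bounded on $A_n$, which is exactly the statement that the relevant term of the farPoint sequence is finite rather than $\infty$.

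Next I would apply the appropriate pseudoperiodicity result: \thmref{pseudo1} for the first family and \thmref{pseudo2} for the other two. Since the $n$-th term of the corresponding farPoint sequence is finite, Definition~\ref{defPseudo} provides a period $h\in\mathbb{N}$ such that the $(n+rh)$-th term equals the $n$-th term, and in particular remains finite, for every $r\in\mathbb{N}^{+}$. Unpacking the definition of $\operatorname{fP}$ at index $n+rh$, this says precisely that no $z\in A_{n+rh}$ is a fixed point of $\Phi$ modulo a sufficiently large power of $2$, hence no such $z$ is a genuine fixed point of $\Phi$. Translated back to the language of the corollary, no $2$-adic of the specified form with $n+rh$ ones is a fixed point of $\Phi$.

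The only nontrivial step is the opening compactness argument: the hypothesis gives only pointwise information ($\Phi(z)\neq z$ for each $z\in A_n$), whereas the farPoint formalism demands a uniform upper bound on $v_{2}(\Phi(z)-z)$. Once that gap is bridged by the compactness of $A_n$ together with continuity of $\Phi$ (a consequence of solenoidality), the three cases of the corollary are immediate formal consequences of Theorems~\ref{pseudo1} and~\ref{pseudo2} and the definitions of $\operatorname{fP}$ and of pseudoperiodicity.
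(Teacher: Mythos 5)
Your proof is correct and follows the route the paper intends: the corollary is presented as an immediate consequence of Theorems \ref{pseudo1} and \ref{pseudo2}, and your derivation is exactly that, with the translation between the digit patterns and the residue classes $2^{n}-1 \pmod{2^{n+1}}$ (and their $\frac{1}{3}$-analogues) handled correctly. The one step you make explicit --- using compactness of the clopen residue class together with continuity of $\Phi$ (from solenoidality) to pass from ``no genuine fixed point in the class'' to ``the corresponding farPoint is finite,'' so that the finite-pseudoperiodicity hypothesis of the theorems actually applies --- is indeed the only nontrivial bridge, and the paper leaves it implicit.
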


Interestingly, the $h$ in Corollary \ref{noFixedPointFamilies} turns out to be
a $2$ power in every case. Some values of farPoint are listed in Table
\ref{fPTable}, from which we obtain an immediate example of the
pseudoperiodicity from Theorem \ref{pseudo1}.

\begin{table}[ptb]
\centering
\par
$
\begin{tabular}
[c]{|l|l||}\hline
$n$ & $\operatorname{fP}_{n}$\\\hline\hline
$1$ & $1$\\\hline
$2$ & $\infty$\\\hline
$3$ & $1$\\\hline
$4$ & $7$\\\hline
$5$ & $1$\\\hline
$6$ & $10$\\\hline
$7$ & $1$\\\hline
$8$ & $29$\\\hline
$9$ & $1$\\\hline
\end{tabular}
\ \
\begin{tabular}
[c]{|l|l||}\hline
$n$ & $\operatorname{fP}_{n}$\\\hline\hline
$10$ & \multicolumn{1}{|c||}{$5$}\\\hline
$11$ & \multicolumn{1}{|c||}{$1$}\\\hline
$12$ & \multicolumn{1}{|c||}{$18$}\\\hline
$13$ & \multicolumn{1}{|c||}{$1$}\\\hline
$14$ & \multicolumn{1}{|c||}{$11$}\\\hline
$15$ & \multicolumn{1}{|c||}{$1$}\\\hline
$16$ & \multicolumn{1}{|c||}{$83$}\\\hline
$17$ & \multicolumn{1}{|c||}{$1$}\\\hline
$18$ & \multicolumn{1}{|c||}{$75$}\\\hline
\end{tabular}
\ \
\begin{tabular}
[c]{|l|l||}\hline
$n$ & $\operatorname{fP}_{n}$\\\hline\hline
$19$ & \multicolumn{1}{|c||}{$1$}\\\hline
$20$ & \multicolumn{1}{|c||}{$8$}\\\hline
$21$ & \multicolumn{1}{|c||}{$1$}\\\hline
$22$ & \multicolumn{1}{|c||}{$6$}\\\hline
$23$ & \multicolumn{1}{|c||}{$1$}\\\hline
$24$ & \multicolumn{1}{|c||}{$11$}\\\hline
$25$ & \multicolumn{1}{|c||}{$1$}\\\hline
$26$ & \multicolumn{1}{|c||}{$5$}\\\hline
$27$ & \multicolumn{1}{|c||}{$1$}\\\hline
\end{tabular}
\ \
\begin{tabular}
[c]{|l|l||}\hline
$n$ & $\operatorname{fP}_{n}$\\\hline\hline
$28$ & \multicolumn{1}{|c||}{$6$}\\\hline
$29$ & \multicolumn{1}{|c||}{$1$}\\\hline
$30$ & \multicolumn{1}{|c||}{$12$}\\\hline
$31$ & \multicolumn{1}{|c||}{$1$}\\\hline
$32$ & \multicolumn{1}{|c||}{$62$}\\\hline
$33$ & \multicolumn{1}{|c||}{$1$}\\\hline
$34$ & \multicolumn{1}{|c||}{$13$}\\\hline
$35$ & \multicolumn{1}{|c||}{$1$}\\\hline
$36$ & \multicolumn{1}{|c||}{$15$}\\\hline
\end{tabular}
\ \
\begin{tabular}
[c]{|l|l||}\hline
$n$ & $\operatorname{fP}_{n}$\\\hline\hline
$37$ & \multicolumn{1}{|c||}{$1$}\\\hline
$38$ & \multicolumn{1}{|c||}{$11$}\\\hline
$39$ & \multicolumn{1}{|c||}{$1$}\\\hline
$40$ & \multicolumn{1}{|c||}{$10$}\\\hline
$41$ & \multicolumn{1}{|c||}{$1$}\\\hline
$42$ & \multicolumn{1}{|c||}{$5$}\\\hline
$43$ & \multicolumn{1}{|c||}{$1$}\\\hline
$44$ & \multicolumn{1}{|c||}{$10$}\\\hline
$45$ & \multicolumn{1}{|c||}{$1$}\\\hline
\end{tabular}
\ \
\begin{tabular}
[c]{|l|l||}\hline
$n$ & $\operatorname{fP}_{n}$\\\hline\hline
$46$ & \multicolumn{1}{|c||}{$7$}\\\hline
$47$ & \multicolumn{1}{|c||}{$1$}\\\hline
$48$ & \multicolumn{1}{|c||}{$13$}\\\hline
$49$ & \multicolumn{1}{|c||}{$1$}\\\hline
$50$ & \multicolumn{1}{|c||}{$8$}\\\hline
$51$ & \multicolumn{1}{|c||}{$1$}\\\hline
$52$ & \multicolumn{1}{|c||}{$9$}\\\hline
$53$ & \multicolumn{1}{|c||}{$1$}\\\hline
$54$ & \multicolumn{1}{|c||}{$6$}\\\hline
\end{tabular}
\ \
\begin{tabular}
[c]{|l|l||}\hline
$n$ & $\operatorname{fP}_{n}$\\\hline\hline
$55$ & \multicolumn{1}{|c||}{$1$}\\\hline
$56$ & \multicolumn{1}{|c||}{$37$}\\\hline
$57$ & \multicolumn{1}{|c||}{$1$}\\\hline
$58$ & \multicolumn{1}{|c||}{$5$}\\\hline
$59$ & \multicolumn{1}{|c||}{$1$}\\\hline
$60$ & \multicolumn{1}{|c||}{$6$}\\\hline
$61$ & \multicolumn{1}{|c||}{$1$}\\\hline
$62$ & \multicolumn{1}{|c||}{$13$}\\\hline
$63$ & \multicolumn{1}{|c||}{$1$}\\\hline
\end{tabular}
\ \
\begin{tabular}
[c]{|l|l|}\hline
$n$ & $\operatorname{fP}_{n}$\\\hline\hline
$64$ & \multicolumn{1}{|c|}{$45$}\\\hline
$65$ & \multicolumn{1}{|c|}{$1$}\\\hline
$66$ & \multicolumn{1}{|c|}{$10$}\\\hline
$67$ & \multicolumn{1}{|c|}{$1$}\\\hline
$68$ & \multicolumn{1}{|c|}{$7$}\\\hline
$69$ & \multicolumn{1}{|c|}{$1$}\\\hline
$70$ & \multicolumn{1}{|c|}{$12$}\\\hline
$71$ & \multicolumn{1}{|c|}{$1$}\\\hline
$72$ & \multicolumn{1}{|c|}{$9$}\\\hline
\end{tabular}
\ $\caption{Values of $\operatorname{fP}_{n}=\operatorname{fP}\left(
2^{n}-1,n+1\right)  $ for $n$ from $1$ to $72$ }
\label{fPTable}
\end{table}

\begin{example}
We see that we can have no $\Phi$ fixed point of the form $1110\cdots$ from
Table \ref{fPTable}, and in this case we have that the $h$ in Corollary
\ref{noFixedPointFamilies} will be equal to $2$. So there will be no fixed
points of the form $111110\cdots$, $11111110\cdots,$ or in general
$\overset{1+2r\text{ ones}}{\overbrace{1111\cdots1}0}\cdots$ for any
$r\in\mathbb{N},$ so that there is no fixed point of $\Phi$ whose digit
expansion will have an odd number of ones followed by a zero.
\end{example}

We will see why $h=2$ is the period of this value of farPoint in the proof of
Theorem \ref{pseudo1} in the next section. Continuing the table further would
illustrate the pseudoperiodicity of all of these farPoints. For example,
$\operatorname{fP}\left(  2^{10}-1,11\right)  $ is pseudoperiodic with period
$h=2^{5-1}=16,$ as seen by the farPoint value of $5$ for $n=10,26,42,58$ in
the above table.

\section{Pseudohomomorphism and $\Phi$ Periodicity}

Here we discuss some general properties of $\Phi$ that will be used in the
proofs of our main results. If $x\in\mathbb{N\subseteq Z}_{2},$ then we define
$\alpha\left(  x\right)  =\underset{i=0}{\overset{\infty}{\sum}}d_{i} $ where
$x=d_{0}d_{1}\cdots.$ The following theorem shows that, with sufficient
restrictions, $\Phi$ acts on the right and left parts of a $2$-adic nearly as
an additive homomorphism.

\begin{theorem}
\label{pseudohomo}Let $x\in\mathbb{Z}_{2}$, $k\in\mathbb{N}$ and
$a\in\mathbb{N}$ with $a<2^{k}.$ Denote $m=\alpha\left(  a\right)  .$ Then
\[
\Phi\left(  a+2^{k}x\right)  =\Phi\left(  a\right)  +\frac{\Phi\left(
x\right)  }{3^{m}}2^{k}
\]

\end{theorem}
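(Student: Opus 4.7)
The plan is to prove this by a direct application of Bernstein's explicit formula (\ref{BrnPhiFormula}) to the binary expansion of $a+2^{k}x$, after observing that the expansions of $a$ and of $2^{k}x$ occupy disjoint ranges of positions.

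First I would write out the relevant expansions. Since $a<2^{k}$, we have $a=\sum_{i=0}^{m-1}2^{c_{i}}$ for some indices $0\leq c_{0}<c_{1}<\cdots<c_{m-1}<k$, where $m=\alpha(a)$ counts the ones. If $x=\sum_{j}2^{e_{j}}$ with $0\leq e_{0}<e_{1}<\cdots$, then $2^{k}x=\sum_{j}2^{k+e_{j}}$. Because every $c_{i}<k\leq k+e_{j}$, the two collections of exponents are disjoint and, concatenated in order, give exactly the list of exponents of $a+2^{k}x$. In other words, the first $m$ digit-positions of $a+2^{k}x$ that carry a $1$ are the $c_{i}$'s, and the remaining ones are the $k+e_{j}$'s.

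Next I would apply (\ref{BrnPhiFormula}) to $a+2^{k}x$ with this combined index list, splitting the sum after the first $m$ terms:
\[
\Phi\!\left(  a+2^{k}x\right)  =-\sum_{i=0}^{m-1}2^{c_{i}}3^{-i}-\sum_{j\geq 0}2^{k+e_{j}}3^{-(m+j)}.
\]
The first sum is $\Phi(a)$ by another application of (\ref{BrnPhiFormula}). In the second sum I factor out $2^{k}/3^{m}$, which reindexes the powers of $3$ so that the remaining factor $-\sum_{j}2^{e_{j}}3^{-j}$ is $\Phi(x)$. Collecting these two pieces yields the claimed identity $\Phi(a+2^{k}x)=\Phi(a)+\frac{\Phi(x)}{3^{m}}2^{k}$.

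There is no real obstacle here; the whole content is the disjointness of the two exponent ranges and the bookkeeping that the $j$-th one of $2^{k}x$ is the $(m+j)$-th one of $a+2^{k}x$, which is exactly what produces the $3^{-m}$ factor. The only case worth mentioning is $a=0$ (so $m=0$), in which the statement reduces to $\Phi(2^{k}x)=2^{k}\Phi(x)$, consistent with the homogeneity already noted after (\ref{BrnPhiFormula}); and the case where $x$ has only finitely many (possibly zero) nonzero digits, in which all sums are finite and the same computation applies verbatim.
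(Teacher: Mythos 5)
Your proposal is correct and follows essentially the same route as the paper: apply Bernstein's formula (\ref{BrnPhiFormula}) to $a+2^{k}x$, use the fact that the exponents of $a$ all lie below $k$ so the combined exponent list is the concatenation of the two, split the sum after the first $m$ terms, and factor $2^{k}/3^{m}$ out of the tail. The extra remarks on the $a=0$ and finite-$x$ cases are fine but not needed beyond what the paper's computation already covers.
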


\begin{proof}
If $a=2^{d_{0}}+2^{d_{1}}+\cdots+2^{d_{m-1}}$ and $x=2^{d_{m}}+2^{d_{m+1}
}+\cdots,$ with $0\leq d_{0}<d_{1}<\cdots<d_{m-1}$ and $0\leq d_{m}
<d_{m+1}<\cdots,$ then by (\ref{BrnPhiFormula}),
\begin{align*}
\Phi\left(  a+2^{k}x\right)   &  =-\left(  \underset{i=0}{\overset{m-1}{\sum}
}2^{d_{i}}3^{-i}+\underset{i=0}{\overset{\infty}{\sum}}2^{d_{i+m}
+k}3^{-\left(  i+m\right)  }\right) \\
&  =-\underset{i=0}{\overset{m-1}{\sum}}2^{d_{i}}3^{-i}+\frac{2^{k}}{3^{m}
}\left(  -\underset{i=0}{\overset{\infty}{\sum}}2^{d_{i+m}}3^{-i}\right) \\
&  =\Phi\left(  a\right)  +\Phi\left(  x\right)  \frac{2^{k}}{3^{m}}
\end{align*}

\end{proof}

Thus, using the ($k$-)left and ($k$-)right notation, for any $x\in
\mathbb{Z}_{2},$
\begin{equation}
\Phi\left(  x\right)  =\Phi\left(  L_{k}\left(  x\right)  \right)  +\frac
{\Phi\left(  R_{k}\left(  x\right)  \right)  }{3^{m}}2^{k}
\end{equation}
for any $k,$ where $m=\alpha\left(  L_{k}\left(  x\right)  \right)  .$
Whenever $m$ is divisible by a positive power of $2,$ the factor $3^{m}$ acts
as $1$ for sufficiently low modulus, so that $\Phi\left(  a+2^{k}x\right)
\equiv\Phi\left(  a\right)  +\Phi\left(  x\right)  2^{k}$, as can be seen in
the following Lemma.

\begin{lemma}
\label{3power}Let $n\in\mathbb{N}^{+}$. Then $3^{2^{n}}\underset{2^{n+2}
}{\equiv}1.$
\end{lemma}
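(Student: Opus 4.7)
The plan is to prove the congruence by induction on $n$, which is the standard way to establish lifting-the-exponent style statements of this form. The only input we really need is the binomial expansion of a square.

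For the base case $n=1$, a direct computation gives $3^{2}=9=1+8$, so $3^{2}\equiv 1 \pmod{2^{3}}$ as required.

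For the inductive step, assume $3^{2^{n}}=1+c\cdot 2^{n+2}$ for some integer $c$. Squaring both sides yields
\[
3^{2^{n+1}}=\bigl(1+c\cdot 2^{n+2}\bigr)^{2}=1+c\cdot 2^{n+3}+c^{2}\cdot 2^{2n+4}.
\]
Both of the last two terms are divisible by $2^{n+3}$ (since $2n+4\geq n+3$ for $n\geq 1$), so $3^{2^{n+1}}\equiv 1\pmod{2^{(n+1)+2}}$, completing the induction.

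There is no real obstacle here: the argument is routine, and the only thing to verify is the inequality $2n+4\geq n+3$ that guarantees the $c^{2}$ term is absorbed at the desired modulus. I expect the author's proof to follow essentially the same two-line induction.
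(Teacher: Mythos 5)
Your proof is correct and is essentially identical to the paper's: the author also inducts on $n$, writes $3^{2^{n}}=1+k\cdot 2^{n+2}$, and squares to get $3^{2^{n+1}}=1+2\cdot 2^{n+2}k+(2^{n+2})^{2}k^{2}$, from which the result follows. Nothing is missing.
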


\begin{proof}
The proof will be by induction. We have that $3^{2}\underset{2^{3}}{\equiv}1.$
For $n\in\mathbb{N}^{+},$ assume $3^{2^{n}}\underset{2^{n+2}}{\equiv}1. $ Then
for some $k\in\mathbb{Z},$ $2^{n+2}k+1=3^{2^{n}}$, and hence $\left(
2^{n+2}\right)  ^{2}k^{2}+2\cdot2^{n+2}k+1=3^{2^{n+1}}.$ The result follows.
\end{proof}

Recall that $\mathbb{Z}_{2}$ contains a subring of the rational numbers,
namely those with odd denominator when in reduced form. Define $\mathbb{Q}
_{odd}=\mathbb{Q\cap Z}_{2}$ to be this subring, as in \cite{MY}. Then any
$q\in\mathbb{Q}_{odd}$ must be of the form $q=d_{0}d_{1}d_{2}\cdots
d_{k-1}\overline{d_{k}d_{k+1}\cdots d_{k+v-1}}$. Hence for any $q\in
\mathbb{Q}_{odd},$ there will be some $a,k,b,v\in\mathbb{N},$ with $a<2^{k}$
and $b<2^{v}$ such that $q=a+2^{k}\left(  \underset{i=0}{\overset{\infty}
{\sum}}b2^{vi}\right)  .$ To simplify this, we introduce the following
notation. For $b,v,t\in\mathbb{N},$ we denote $\bar{b}_{v,t}=b\underset
{i=0}{\overset{t-1}{\sum}}2^{vi}$ and $\bar{b}_{v,\infty}=b\underset
{i=0}{\overset{\infty}{\sum}}2^{vi}.$ In this notation, $q=a+2^{k}\bar
{b}_{v,\infty}.$ There is a strong relationship between the action of $\Phi$
on a rational $2$-adic and on a $2$-adic that results from a truncation after
a finite number of its repeating units, as is given by the following theorem.

\begin{theorem}
\label{infinitevsRational}Let $q\in\mathbb{Q}_{odd}$ and $a,k,b,v\in
\mathbb{N}$ such that $a<2^{k},b<2^{v},$ and $q=a+2^{k}\bar{b}_{v,\infty}.$
Define $m=\alpha\left(  b\right)  .$ Let $t\in\mathbb{N}^{+}.$ Then

\[
\Phi\left(  a+2^{k}\bar{b}_{v,t}\right)  =\Phi\left(  q\right)  -\frac
{\Phi\left(  \bar{b}_{v,\infty}\right)  }{3^{mt+\alpha\left(  a\right)  }
}2^{k+tv}
\]

\end{theorem}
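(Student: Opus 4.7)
The approach is to exploit the self-similar decomposition of the repeating tail and apply Theorem \ref{pseudohomo}. Since $\bar{b}_{v,\infty} = \bar{b}_{v,t} + 2^{vt}\bar{b}_{v,\infty}$, setting $a' = a + 2^k \bar{b}_{v,t}$ gives
\[
q \;=\; a + 2^k\bar{b}_{v,\infty} \;=\; a' + 2^{k+vt}\,\bar{b}_{v,\infty},
\]
so that $\Phi(q)$ can be re-expressed via pseudohomomorphism as the sum of $\Phi(a')$ and a controllable tail term. Solving this identity for $\Phi(a') = \Phi(a+2^k\bar{b}_{v,t})$ should yield the stated formula.

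To legitimately invoke Theorem \ref{pseudohomo} on the decomposition $q = a' + 2^{k+vt}\bar{b}_{v,\infty}$, I first check the natural-number bound $a' < 2^{k+vt}$ and compute $\alpha(a')$. The hypothesis $b < 2^v$ ensures that the blocks $b\cdot 2^{vi}$ for $0 \leq i \leq t-1$ occupy disjoint windows of $v$ binary digits, which gives both $\bar{b}_{v,t} < 2^{vt}$ and $\alpha(\bar{b}_{v,t}) = tm$. Combined with $a < 2^k$, the first inequality produces $a' < 2^k + (2^{k+vt} - 2^k) = 2^{k+vt}$. Moreover, the support of $a$ lies in digit positions $[0,k)$ while the support of $2^k\bar{b}_{v,t}$ lies in $[k, k+vt)$, so the digit-sums add and $\alpha(a') = \alpha(a) + tm$.

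Applying Theorem \ref{pseudohomo} with this $a'$, exponent $k+vt$, and $x = \bar{b}_{v,\infty}$ then produces
\[
\Phi(q) \;=\; \Phi(a') + \frac{\Phi(\bar{b}_{v,\infty})}{3^{\alpha(a')}}\,2^{k+vt} \;=\; \Phi(a') + \frac{\Phi(\bar{b}_{v,\infty})}{3^{mt+\alpha(a)}}\,2^{k+tv},
\]
and isolating $\Phi(a')$ yields the claimed identity. I do not expect a substantial obstacle here; the only delicacy is the accounting of $\alpha(a')$, which works precisely because the prefix $a$ and the repeating blocks $b\cdot 2^{vi}$ occupy disjoint digit positions, a fact guaranteed by the hypotheses $a<2^k$ and $b<2^v$.
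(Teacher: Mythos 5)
Your proposal is correct and follows essentially the same route as the paper: decompose $q = \left(a + 2^k\bar{b}_{v,t}\right) + 2^{k+tv}\bar{b}_{v,\infty}$, apply Theorem \ref{pseudohomo}, and solve for $\Phi\left(a+2^k\bar{b}_{v,t}\right)$. The only difference is that you spell out the verifications ($a' < 2^{k+tv}$ and $\alpha(a') = \alpha(a)+tm$) that the paper leaves implicit, which is a welcome addition rather than a divergence.
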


\begin{proof}
Clearly $L_{k+tv}\left(  q\right)  =a+2^{k}\bar{b}_{v,t}$ and $R_{k+tv}\left(
q\right)  =\bar{b}_{v,\infty},$ so by again Theorem \ref{pseudohomo},
\[
\Phi\left(  q\right)  =\Phi\left(  a+2^{k}\bar{b}_{v,t}+2^{k+tv}\bar
{b}_{v,\infty}\right)  =\Phi\left(  a+2^{k}\bar{b}_{v,t}\right)  +\frac
{\Phi\left(  \bar{b}_{v,\infty}\right)  }{3^{mt+\alpha\left(  a\right)  }
}2^{k+tv}
\]

\end{proof}

We also state the analogous theorem for $2$-adics whose digits are strictly repeating.

\begin{corollary}
\label{infinitevs}Let $v\in\mathbb{N}^{+},t\in\mathbb{N}$ and let
$b\in\mathbb{N}$ such that $b<2^{k}.$ Define $m=\alpha\left(  b\right)  .$ Then

\[
\Phi\left(  \bar{b}_{v,t}\right)  =\Phi\left(  \bar{b}_{v,\infty}\right)
-\frac{\Phi\left(  \bar{b}_{v,\infty}\right)  }{3^{mt}}2^{tv}
\]

\end{corollary}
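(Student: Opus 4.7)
The plan is to obtain Corollary \ref{infinitevs} as the direct specialization of Theorem \ref{infinitevsRational} in which the non-repeating prefix is empty. Concretely, I would set $a = 0$ and $k = 0$ in the hypothesis of Theorem \ref{infinitevsRational}. The inequality $a < 2^k$ reads $0 < 1$, which holds, and the rational $2$-adic $q = a + 2^k \bar{b}_{v,\infty}$ collapses to the strictly repeating element $\bar{b}_{v,\infty}$ itself, which is exactly the object of interest in the corollary.

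Next I would substitute $a = 0$, $k = 0$ into the conclusion of Theorem \ref{infinitevsRational}. From the explicit formula (\ref{BrnPhiFormula}) we have $\Phi(0) = 0$ (empty indexing sum), and trivially $\alpha(0) = 0$. Thus the exponent $mt + \alpha(a)$ reduces to $mt$, the factor $2^{k+tv}$ reduces to $2^{tv}$, and the left-hand side $\Phi(a + 2^k \bar{b}_{v,t})$ becomes $\Phi(\bar{b}_{v,t})$. The identity in Theorem \ref{infinitevsRational} then reads exactly as the claimed formula.

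The only mild wrinkle is that Theorem \ref{infinitevsRational} is stated for $t \in \mathbb{N}^+$, while the corollary permits $t \in \mathbb{N}$. The case $t = 0$ therefore requires a separate (essentially trivial) check: by the empty-sum convention $\bar{b}_{v,0} = 0$, so $\Phi(\bar{b}_{v,0}) = 0$, while the right-hand side evaluates to $\Phi(\bar{b}_{v,\infty}) - \Phi(\bar{b}_{v,\infty}) \cdot 1 = 0$, matching. Since everything reduces to substitution into the previous theorem, there is no genuine obstacle; the corollary is really a restatement highlighting the strictly periodic case that will be used later. If a completely self-contained derivation were preferred, one could alternatively apply the pseudohomomorphism identity of Theorem \ref{pseudohomo} to the decomposition $\bar{b}_{v,\infty} = \bar{b}_{v,t} + 2^{tv}\bar{b}_{v,\infty}$ and solve for $\Phi(\bar{b}_{v,t})$, but this is just Theorem \ref{infinitevsRational} with the prefix suppressed.
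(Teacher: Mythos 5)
Your proposal is correct and is exactly the paper's proof: the author simply sets $q=\bar{b}_{v,\infty}$ (i.e.\ $a=0$, $k=0$) in Theorem \ref{infinitevsRational}. Your extra check of the $t=0$ case, which the corollary permits but the theorem's hypothesis $t\in\mathbb{N}^{+}$ does not literally cover, is a small point of care the paper omits.
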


\begin{proof}
Let $q=\bar{b}_{v,\infty}$ in Theorem \ref{infinitevsRational}
\end{proof}

Now we present the $\Phi$ periodicity result for rational $2$-adics. It is
well known that $\Phi\left(  \mathbb{Q}_{odd}\right)  \subseteq\mathbb{Q}
_{odd},$ since if a $2$-adic has a rational parity vector, then it is the
solution of a linear equation with rational coefficients. Thus $\Phi$ of any
rational number will have eventually repeating digits. The following result,
which does not take into account fixed points, can be used to obtain the
desired pseudoperiodicity of the farPoint sequences.

\begin{theorem}
\label{mainthmRational}Let $q\in\mathbb{Q}_{odd},$ and $a,k,b,v\in\mathbb{N}$
such that $a<2^{k},b<2^{v}$ and $q=a+2^{k}\bar{b}_{v,\infty}.$ Let
\[
\Phi\left(  q\right)  =r_{0}r_{1}\cdots r_{u-1}\overline{r_{u}r_{u+1}\cdots
r_{u+p-1}}
\]
and choose $t$ such that $k+tv\geq u.$ Then for all positive integers $H,$
\[
R_{k+tv}\Phi L_{k+tv}\left(  q\right)  \underset{2^{H+2}}{\equiv}R_{k+\left(
t+p2^{H}\right)  v}\Phi L_{k+\left(  t+p2^{H}\right)  v}\left(  q\right)
\]

\end{theorem}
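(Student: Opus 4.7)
The plan is to combine Theorem \ref{infinitevsRational} with solenoidality to rewrite both sides in terms of $\Phi(q)$, after which the congruence collapses to a power-of-three computation handled by Lemma \ref{3power}. Set $N=k+tv$ and $N'=k+(t+p2^H)v$, so $N'-N=pv\cdot 2^H$, and abbreviate $c=\alpha(a)$, $B=\Phi(\bar b_{v,\infty})$. Two applications of Theorem \ref{infinitevsRational} give
\[
\Phi(L_N(q))=\Phi(q)-\frac{B}{3^{mt+c}}\,2^N,\qquad \Phi(L_{N'}(q))=\Phi(q)-\frac{B}{3^{m(t+p2^H)+c}}\,2^{N'}.
\]

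Next I would extract the right parts. Writing $\Phi(L_N(q))=L_N\Phi(L_N(q))+2^N R_N\Phi L_N(q)$ and using solenoidality to replace $L_N\Phi(L_N(q))$ by $L_N\Phi(q)$, I subtract $L_N\Phi(q)$ from the first display and divide by $2^N$ to obtain
\[
R_N\Phi L_N(q)=R_N\Phi(q)-\frac{B}{3^{mt+c}},
\]
and an identical computation at level $N'$ yields $R_{N'}\Phi L_{N'}(q)=R_{N'}\Phi(q)-B/3^{m(t+p2^H)+c}$. Because $N\geq u$ by hypothesis, $R_N\Phi(q)$ is strictly periodic with digit-period $p$, and the shift difference $N'-N=pv\cdot 2^H$ is a multiple of $p$; hence $R_N\Phi(q)=R_{N'}\Phi(q)$. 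The desired congruence therefore reduces to
\[
\frac{B}{3^{mt+c}}\underset{2^{H+2}}{\equiv}\frac{B}{3^{mt+c+mp2^H}}.
\]

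To finish, I would factor out $B/3^{mt+c}\in\mathbb{Z}_2$ and observe that the reduced statement is $3^{mp2^H}\underset{2^{H+2}}{\equiv}1$. This is immediate from Lemma \ref{3power}: $3^{2^H}\underset{2^{H+2}}{\equiv}1$ and raising to the $mp$-th power preserves the congruence, while invertibility of $3$ in $\mathbb{Z}_2$ upgrades the statement to its inverse version. The only genuinely delicate step is the middle one --- the use of solenoidality to pass from the Theorem \ref{infinitevsRational} identity for $\Phi(L_N(q))$ to a clean expression for $R_N\Phi L_N(q)$ --- and I expect that bookkeeping to be the main obstacle; once it is in place, the theorem is essentially a one-line application of Lemma \ref{3power}.
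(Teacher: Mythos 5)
Your proposal is correct and follows essentially the same route as the paper: two applications of Theorem \ref{infinitevsRational}, the observation that $R_{k+tv}\Phi(q)=R_{k+(t+p2^{H})v}\Phi(q)$ once $k+tv\geq u$, and Lemma \ref{3power} to identify the two powers of $3$ modulo $2^{H+2}$. The only cosmetic difference is that you extract the right part via solenoidality and the decomposition $x=L_{N}(x)+2^{N}R_{N}(x)$, where the paper invokes the identity $R_{c}(y+2^{c}z)=R_{c}(y)+z$ directly; these are interchangeable.
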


\begin{proof}
Define $m=\alpha\left(  b\right)  .$ From Theorem \ref{infinitevsRational} we
see that
\begin{align*}
R_{k+tv}\left(  \Phi\left(  q\;Mod\;2^{k+tv}\right)  \right)   &
=R_{k+tv}\left(  \Phi\left(  a+2^{k}\bar{b}_{v,t}\right)  \right) \\
&  =R_{k+tv}\left(  \Phi\left(  q\right)  -\frac{\Phi\left(  \bar{b}
_{v,\infty}\right)  }{3^{mt+\alpha\left(  a\right)  }}2^{k+tv}\right)
\end{align*}
and also that
\begin{align*}
R_{k+\left(  t+p2^{H}\right)  v}\left(  \Phi\left(  q\;Mod\;2^{k+\left(
t+p2^{H}\right)  v}\right)  \right)   &  =R_{k+\left(  t+p2^{H}\right)
v}\left(  \Phi\left(  a+2^{k}\bar{b}_{v,t+p2^{H}}\right)  \right) \\
&  =R_{k+\left(  t+p2^{H}\right)  v}\left(  \Phi\left(  q\right)  -\frac
{\Phi\left(  \bar{b}_{v,\infty}\right)  }{3^{m\left(  t+p2^{H}\right)
+\alpha\left(  a\right)  }}2^{k+\left(  t+p2^{H}\right)  v}\right)
\end{align*}
But by Lemma \ref{3power},
\[
\frac{\Phi\left(  \bar{b}_{v,\infty}\right)  }{3^{mt+\alpha\left(  a\right)
}}\underset{2^{H+2}}{\equiv}\frac{\Phi\left(  \bar{b}_{v,\infty}\right)
}{3^{mt+\alpha\left(  a\right)  }}\frac{1}{\left(  3^{2^{H}}\right)  ^{mp}
}=\frac{\Phi\left(  \bar{b}_{v,\infty}\right)  }{3^{m\left(  t+p2^{H}\right)
+\alpha\left(  a\right)  }}
\]
Now, since $k+tv\geq u,$
\[
R_{k+tv}\left(  \Phi\left(  q\right)  \right)  =R_{k+\left(  t+p2^{H}\right)
v}\left(  \Phi\left(  q\right)  \right)
\]
Since for any $y,z\in\mathbb{Z}_{2}$ and $c\in\mathbb{N},$
\begin{equation}
R_{c}\left(  y+2^{c}z\right)  =R_{c}\left(  y\right)
+z\label{cute2adicArithmetic}
\end{equation}
it follows that
\[
\begin{tabular}
[c]{ll}
$R_{k+tv}\left(  \Phi\left(  q\right)  -\frac{\Phi\left(  \bar{b}_{v,\infty
}\right)  }{3^{mt+\alpha\left(  a\right)  }}2^{k+tv}\right)  $ &
$\underset{2^{H+2}}{\equiv}R_{k+tv}\Phi\left(  q\right)  -\frac{\Phi\left(
\bar{b}_{v,\infty}\right)  }{3^{mt+\alpha\left(  a\right)  }}$\\
& $\underset{2^{H+2}}{\equiv}R_{k+\left(  t+p2^{H}\right)  v}\Phi\left(
q\right)  -\frac{\Phi\left(  \bar{b}_{v,\infty}\right)  }{3^{m\left(
t+p2^{H}\right)  +\alpha\left(  a\right)  }}$\\
& $\underset{2^{H+2}}{\equiv}R_{k+\left(  t+p2^{H}\right)  v}\left(
\Phi\left(  q\right)  -\frac{\Phi\left(  \bar{b}_{v,\infty}\right)
}{3^{m\left(  t+p2^{H}\right)  +\alpha\left(  a\right)  }}2^{k+\left(
t+p2^{H}\right)  v}\right)  $
\end{tabular}
\]
where the first and third lines in particular follow from
(\ref{cute2adicArithmetic}), and the result follows.
\end{proof}

There is a similar, simpler, result for strictly repeating $2$-adics:

\begin{corollary}
\label{mainthm}Let $v\in\mathbb{N}^{+},$ and $b\in\mathbb{N}$ such that
$b<2^{v}.$ Define $m=\alpha\left(  b\right)  .$ Let
\[
\Phi\left(  \bar{b}_{v,\infty}\right)  =r_{0}r_{1}\cdots r_{u-1}
\overline{r_{u}r_{u+1}\cdots r_{u+p-1}}
\]
\newline and choose $t\in\mathbb{N}$ such that $tv\geq u.$ Then for all
positive integers $H,$
\[
R_{tv}\left(  \Phi\left(  \bar{b}_{v,t}\right)  \right)  \underset{2^{H+2}
}{\equiv}R_{\left(  t+p2^{H}\right)  v}\left(  \Phi\left(  \bar{b}
_{v,t+p2^{H}}\right)  \right)
\]

\end{corollary}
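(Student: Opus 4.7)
The plan is to derive this as an immediate specialization of \thmref{mainthmRational}, taking the trivial prefix $a=0$, $k=0$ so that $q:=\bar b_{v,\infty}$ satisfies the decomposition $q=a+2^{k}\bar b_{v,\infty}$ demanded by that theorem. The hypothesis $a<2^{k}$ reads $0<1$, which is fine, and $\alpha(a)=0$. The only point to check before quoting the theorem is that $L_{tv}(\bar b_{v,\infty})=\bar b_{v,t}$; this is true because the assumption $b<2^{v}$ forces no carries between successive blocks in $\sum_{i\ge 0} b\cdot 2^{vi}$, so the first $tv$ binary digits of $\bar b_{v,\infty}$ are literally $t$ copies of the length-$v$ block encoding $b$. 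Substituting $a=0$, $k=0$ in the conclusion of \thmref{mainthmRational} then gives exactly
\[
R_{tv}\Phi(\bar b_{v,t}) \underset{2^{H+2}}{\equiv} R_{(t+p2^{H})v}\Phi(\bar b_{v,t+p2^{H}}),
\]
which is the statement of the corollary.

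Should one prefer a self-contained argument that does not route through the $a=0$, $k=0$ corner case, the mechanism of \thmref{mainthmRational} specializes transparently using \lemref{3power} and Corollary~\ref{infinitevs}. First apply Corollary~\ref{infinitevs} at $s=t$ and $s=t+p2^{H}$ to express each truncation as
\[
\Phi(\bar b_{v,s}) = \Phi(\bar b_{v,\infty}) - \frac{\Phi(\bar b_{v,\infty})}{3^{ms}}\, 2^{sv}.
\]
Next, \lemref{3power} gives $3^{2^{H}}\equiv 1 \pmod{2^{H+2}}$, so the two coefficients $1/3^{mt}$ and $1/3^{m(t+p2^{H})}$ agree modulo $2^{H+2}$. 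Meanwhile the choice $tv\ge u$ ensures that the tail of $\Phi(\bar b_{v,\infty})$ past position $tv$ is purely periodic of period $p$, and shifting by a multiple of $pv$ leaves $R$ unchanged, so $R_{tv}\Phi(\bar b_{v,\infty})=R_{(t+p2^{H})v}\Phi(\bar b_{v,\infty})$. Finally, the elementary identity $R_{c}(y+2^{c}z)=R_{c}(y)+z$ recorded in (\ref{cute2adicArithmetic}) lets one pass $R$ through the subtracted correction term, assembling the desired congruence.

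Because the corollary is literally a specialization of the preceding theorem, no genuine obstacle arises: the entire technical content—the $1/3^{ms}$-coefficient alignment via \lemref{3power} and the tail-equality coming from $tv\ge u$—has already been done. The only mildly delicate step is the combinatorial identification $L_{tv}(\bar b_{v,\infty})=\bar b_{v,t}$, which is where the strict inequality $b<2^{v}$ is used.
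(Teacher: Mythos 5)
Your proposal is correct and matches the paper's proof, which is exactly the one-line specialization $q=\bar{b}_{v,\infty}$ (i.e.\ $a=0$, $k=0$) in Theorem~\ref{mainthmRational}; your extra checks that $\alpha(a)=0$ and $L_{tv}(\bar{b}_{v,\infty})=\bar{b}_{v,t}$ are just the details the paper leaves implicit.
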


\begin{proof}
Let $q=\bar{b}_{v,\infty}$ in Theorem \ref{mainthmRational}.
\end{proof}

If $q$ is a fixed point of $\Phi,$ then $k=u$ and $v=p$ in Theorem
\ref{mainthmRational}. We are in a position to prove Theorem
\ref{periocityOfPhi}.

\section{Proofs of Main Results}

\begin{proof}
[Proof of Theorem \ref{periocityOfPhi}]Let $q\in\mathbb{Q}_{odd}$ and
$a,k,b,v\in\mathbb{N}$ such that $a<2^{k},b<2^{v},$ and $q=a+2^{k}\bar
{b}_{v,\infty}$ and let $H\geq3$ be fixed. Let $t$ be the smallest positive
natural number such that $k+tv\geq u, $ where $\Phi\left(  q\right)
=r_{0}r_{1}\cdots r_{u-1}\overline{r_{u}r_{u+1}\cdots r_{u+p-1}}.$ Let $n\geq
k+tv,$ and define $l=n-\left(  k+tv\right)  .$ By Theorem
\ref{mainthmRational} (using $k+l$ for the value of $k$ in the theorem),
\[
\begin{tabular}
[c]{ll}
$R_{n}\Phi L_{n}\left(  q\right)  $ & $=R_{k+l+tv}\Phi L_{k+l+tv}\left(
q\right)  $\\
& $\underset{2^{H}}{\equiv}R_{k+l+\left(  t+p2^{H-2}\right)  v}\Phi
L_{k+l+\left(  t+p2^{H-2}\right)  v}\left(  q\right)  $\\
& $=R_{k+l+tv+p2^{H-2}v}\Phi L_{k+l+tv+p2^{H-2}v}\left(  q\right)  $\\
& $=R_{n+p2^{H-2}v}\Phi L_{n+p2^{H-2}v}\left(  q\right)  $
\end{tabular}
\]
Hence $L_{H}R_{n}\Phi L_{n}\left(  q\right)  =L_{H}R_{n+p2^{H-2}v}\Phi
L_{n+p2^{H-2}v}\left(  q\right)  .$ Therefore the sequence $\left\{
L_{H}R_{n}\Phi L_{n}\left(  q\right)  \right\}  _{n\geq k+tv}$ is periodic.
\end{proof}

From Corollary \ref{mainthm}, the result that $\left\{  \operatorname{fP}
\left(  2^{n}-1,n+1\right)  \right\}  $ is pseudoperiodic follows. To see
this, we will consider $2$-adics whose first $n$ digits agree with those of
the fixed point $-1.$

\begin{proof}
[Proof of Theorem \ref{pseudo1}]Let $n\in\mathbb{N}$ and assume that the
farPoint of $2^{n}-1$ on the first $n+1$ digits is finite. Let
$f=\operatorname{fP}\left(  2^{n}-1,n+1\right)  $ and $H\geq\max\left\{
1,f-1\right\}  .$ So there is no $z\in\mathbb{Z}_{2}$ such that $z\underset
{2^{n+1}}{\equiv}2^{n}-1$ and $\Phi\left(  z\right)  \underset{2^{n+1+f}
}{\equiv}z,$ by definition of farPoint. Let $y\in\mathbb{Z}_{2}$ be arbitrary
and define
\[
z=2^{n}-1+2^{n+1}y\text{ and }z^{\prime}=2^{n+2^{H}}-1+2^{n+1+2^{H}}y
\]
Then $z$ is not a fixed point of $\Phi\operatorname{mod}2^{n+1+f},$ and by
Theorem \ref{pseudohomo}, $\Phi\left(  z\right)  =\Phi\left(  2^{n}-1\right)
+\frac{\Phi\left(  2y\right)  }{3^{n}}2^{n}.$ Similarly, $\Phi\left(
z^{\prime}\right)  =\Phi\left(  2^{n+2^{H}}-1\right)  +\frac{\Phi\left(
2y\right)  }{3^{n+2^{H}}}2^{n+2^{H}}.$ Furthermore, since $3^{2^{H}}
\underset{2^{H+2}}{\equiv}1$ by Lemma \ref{3power} and $R_{n+2^{H}}\Phi\left(
2^{n+2^{H}}-1\right)  \underset{2^{H+2}}{\equiv}R_{n}\Phi\left(
2^{n}-1\right)  $ by Corollary \ref{mainthm}, letting $b=v=1,$ again using
(\ref{cute2adicArithmetic}),
\[
\begin{tabular}
[c]{ll}
$R_{n+2^{H}}\Phi\left(  z^{\prime}\right)  $ & $\underset{2^{H+2}}{\equiv
}R_{n+2^{H}}\Phi\left(  2^{n+2^{H}}-1\right)  +\frac{\Phi\left(  2y\right)
}{3^{m+2^{H}}}$\\
& $\underset{2^{H+2}}{\equiv}R_{n}\Phi\left(  2^{n}-1\right)  +\frac
{\Phi\left(  2y\right)  }{3^{m}}$\\
& $\underset{2^{H+2}}{\equiv}R_{n}\Phi\left(  z\right)  $
\end{tabular}
\]
Furthermore, it is clear from how we have defined $z$ and $z^{\prime}$ that
$R_{n}\left(  z\right)  =2y=R_{n+2^{H}}\left(  z^{\prime}\right)  .$ Now, by
definition of farPoint, $R_{n}\left(  \Phi\left(  z\right)  \right)
\underset{2^{f+1}}{\not \equiv }R_{n}\left(  z\right)  ,$ and hence we have
that
\[
R_{n+2^{H}}\left(  \Phi\left(  z^{\prime}\right)  \right)  \underset{2^{H+2}
}{\equiv}R_{n}\left(  \Phi\left(  z\right)  \right)  \underset{2^{f+1}
}{\not \equiv }R_{n}\left(  z\right)  \underset{2^{H+2}}{\equiv}R_{n+2^{H}
}\left(  z^{\prime}\right)
\]
Thus $\operatorname{fP}\left(  2^{n+2^{H}}-1,n+1+2^{H}\right)  \leq f.$

To show the other inequality, we choose $y\in\mathbb{Z}_{2}$ so that if
$z=2^{n}-1+2^{n+1}y,$ then $\Phi\left(  z\right)  \underset{2^{n+f}}{\equiv}z.
$ We know that such a $y$ exists by definition of farPoint. As before, we will
define $z^{\prime}=2^{n+2^{H}}-1+2^{n+1+2^{H}}y$ for this choice of $y, $ and
from the above discussion we know that $R_{n+2^{H}}\Phi\left(  z^{\prime
}\right)  \underset{2^{f}}{\equiv}R_{n}\Phi\left(  z\right)  $ and
$R_{n}\left(  z\right)  \underset{2^{f}}{\equiv}R_{n+2^{H}}\left(  z^{\prime
}\right)  $ since $f\leq H+1.$ However, since $\Phi\left(  z\right)
\underset{2^{n+f}}{\equiv}z,$ now $R_{n}\left(  \Phi\left(  z\right)  \right)
\underset{2^{f}}{\equiv}R_{n}\left(  z\right)  ,$ and therefore
\[
R_{n+2^{H}}\left(  \Phi\left(  z^{\prime}\right)  \right)  \underset{2^{f}
}{\equiv}R_{n}\left(  \Phi\left(  z\right)  \right)  \underset{2^{f}}{\equiv
}R_{n}\left(  z\right)  \underset{2^{f}}{\equiv}R_{n+2^{H}}\left(  z^{\prime
}\right)
\]
Clearly, $\Phi\left(  z^{\prime}\right)  \underset{2^{n+2^{H}}}{\equiv
}z^{\prime}$ by solenoidality since $\Phi\left(  -1\right)  =-1,$ and
therefore $\operatorname{fP}\left(  2^{n+2^{H}}-1,n+1+2^{H}\right)
=\operatorname{fP}\left(  2^{n}-1,n+1\right)  .$ This completes the proof.
\end{proof}

The proof of Theorem \ref{pseudo2} is similar, but relies instead upon Theorem
\ref{mainthmRational}.

\begin{proof}
[Proof of Theorem \ref{pseudo2}]This time we will work with those $2$-adic
initial segments agreeing nontrivially with $\frac{1}{3}=\Phi\left(  \frac
{1}{3}\right)  ,$ and so we apply Theorem \ref{mainthmRational} with
$q=\frac{1}{3}.$ Once again, for $n\in\mathbb{N},$ assume that
$f=\operatorname{fP}\left(  1+\left(  \underset{i=0}{\overset{n}{\sum}
}2^{2i+1}\right)  +2^{2n+2},2n+3\right)  $ is finite and let $H\geq
\max\left\{  1,f-1\right\}  .$ Note that $1+\underset{i=0}{\overset{n}{\sum}
}2^{2i+1}\equiv\frac{1}{3}\;Mod\;2^{2n+2},$ and that there is no
$z\in\mathbb{Z}_{2}$ with $z\underset{2^{2n+3}}{\equiv}1+\left(
\underset{i=0}{\overset{n}{\sum}}2^{2i+1}\right)  +2^{2n+2}$ and $\Phi\left(
z\right)  \underset{2^{2n+3+f}}{\equiv}z.$ Let $y\in\mathbb{Z}_{2}$ and
define
\begin{align*}
z  &  =1+\left(  \underset{i=0}{\overset{n}{\sum}}2^{2i+1}\right)
+2^{2n+2}+2^{2n+3}y\\
z^{\prime}  &  =1+\left(  \underset{i=0}{\overset{n+2^{H+1}}{\sum}}
2^{2i+1}\right)  +2^{2\left(  n+2^{H+1}\right)  +2}+2^{2\left(  n+2^{H+1}
\right)  +3}y
\end{align*}
Here $z$ is not a fixed point of $\Phi\operatorname{mod}2^{2n+3+f}.$ By
Theorem \ref{pseudohomo},
\[
\Phi\left(  z\right)  =\Phi\left(  1+\left(  \underset{i=0}{\overset{n}{\sum}
}2^{2i+1}\right)  \right)  +\frac{\Phi\left(  1+2y\right)  }{3^{n+2}}2^{2n+2}
\]
and also
\[
\Phi\left(  z^{\prime}\right)  =\Phi\left(  1+\left(  \underset{i=0}
{\overset{n+2^{H+1}}{\sum}}2^{2i+1}\right)  \right)  +\frac{\Phi\left(
1+2y\right)  }{3^{n+2^{H+1}+2}}2^{2\left(  n+2^{H+1}\right)  +2}
\]
Again, $3^{2^{H+1}}\underset{2^{H+2}}{\equiv}1$ by Lemma \ref{3power}, and
\[
R_{2+2n+2^{H+2}}\Phi\left(  1+\left(  \underset{i=0}{\overset{n+2^{H+1}}{\sum
}}2^{2i+1}\right)  +2^{2\left(  n+2^{H+1}\right)  +2}\right)  \underset
{2^{H+2}}{\equiv}R_{2+2n}\Phi\left(  1+\left(  \underset{i=0}{\overset{n}
{\sum}}2^{2i+1}\right)  +2^{2n+2}\right)
\]
by Theorem \ref{mainthmRational} with $b=k=v=2$ and $a=3.$ Therefore,
\[
\begin{tabular}
[c]{ll}
$R_{2+2n+2^{H+2}}\left(  \Phi\left(  z^{\prime}\right)  \right)  $ &
$\underset{2^{H+2}}{\equiv}R_{2+2n+2^{H+2}}\Phi\left(  1+\left(
\underset{i=0}{\overset{n+2^{H+1}}{\sum}}2^{2i+1}\right)  +2^{2\left(
n+2^{H+1}\right)  +2}\right)  +\frac{\Phi\left(  1+2y\right)  }{3^{n+2^{H+1}
+2}}$\\
& $\underset{2^{H+2}}{\equiv}R_{2+2n}\Phi\left(  1+\left(  \underset
{i=0}{\overset{n}{\sum}}2^{2i+1}\right)  +2^{2n+2}\right)  +\frac{\Phi\left(
1+2y\right)  }{3^{n+2}}$\\
& $\underset{2^{H+2}}{\equiv}\Phi\left(  z\right)  $
\end{tabular}
\]
by (\ref{cute2adicArithmetic}). It is again clear from how $z$ and $z^{\prime
}$ are defined that $R_{2+2n}\left(  z\right)  =1+2y=R_{2+2n+2^{H+2}}\left(
z^{\prime}\right)  $. By definition of farPoint, $R_{2+2n}\left(  \Phi\left(
z\right)  \right)  \underset{2^{f+1}}{\not \equiv }R_{2+2n}\left(  z\right)
,$ and hence
\[
R_{2+2n+2^{H+2}}\left(  \Phi\left(  z^{\prime}\right)  \right)  \underset
{2^{H+2}}{\equiv}R_{2+2n}\left(  \Phi\left(  z\right)  \right)  \underset
{2^{f+1}}{\not \equiv }R_{2+2n}\left(  z\right)  \underset{2^{H+2}}{\equiv
}R_{2+2n+2^{H+2}}\left(  z^{\prime}\right)
\]
So we have shown that $\operatorname{fP}\left(  1+\left(  \underset
{i=0}{\overset{n+2^{H+1}}{\sum}}2^{2i+1}\right)  +2^{2\left(  n+2^{H+1}
\right)  +2},2\left(  n+2^{H+1}\right)  +3\right)  \leq f.$ Furthermore we can
choose $y\in\mathbb{Z}_{2}$ so that if $z=1+\left(  \underset{i=0}{\overset
{n}{\sum}}2^{2i+1}\right)  +2^{2n+2}+2^{2n+3}y,$ then $\Phi\left(  z\right)
\underset{2^{2n+f+2}}{\equiv}z.$ Similarly, we define $z^{\prime}=1+\left(
\underset{i=0}{\overset{n+2^{H+1}}{\sum}}2^{2i+1}\right)  +2^{2\left(
n+2^{H+1}\right)  +2}+2^{2\left(  n+2^{H+1}\right)  +3}y$ for this same $y,$
and we can again obtain $R_{2+2n+2^{H+2}}\left(  \Phi\left(  z^{\prime
}\right)  \right)  \underset{2^{f}}{\equiv}R_{2+2n}\left(  \Phi\left(
z\right)  \right)  $ and $R_{2+2n}\left(  z\right)  \underset{2^{f}}{\equiv
}R_{2+2n+2^{H+2}}\left(  z^{\prime}\right)  $ since $f\leq H+1$. But since
$\Phi\left(  z\right)  \underset{2^{2n+f+2}}{\equiv}z,$ we have $R_{2+2n}
\left(  \Phi\left(  z\right)  \right)  \underset{2^{f}}{\equiv}R_{2+2n}\left(
z\right)  ,$ and so
\[
R_{2+2n+2^{H+2}}\left(  \Phi\left(  z^{\prime}\right)  \right)  \underset
{2^{f}}{\equiv}R_{2+2n}\left(  \Phi\left(  z\right)  \right)  \underset{2^{f}
}{\equiv}R_{2+2n}\left(  z\right)  \underset{2^{f}}{\equiv}R_{2+2n+2^{H+2}
}\left(  z^{\prime}\right)
\]
And since $\frac{1}{3}$ is a fixed point of $\Phi,$ $\Phi\left(  z^{\prime
}\right)  \equiv z^{\prime}\left(  \operatorname{mod}2+2n+2^{H+2}\right)  $ by
solenoidality, and thus we have shown that $\left\{  \operatorname{fP}\left(
1+\left(  \underset{i=0}{\overset{n}{\sum}}2^{2i+1}\right)  +2^{2n+2}
,2n+3\right)  \right\}  $ is finitely pseudoperiodic. The other half of the
proof, showing that the sequence $\left\{  \operatorname{fP}\left(
1+\underset{i=0}{\overset{n}{\sum}}2^{2i+1},2n+4\right)  \right\}  $ is
finitely pseudoperiodic is similar, also relying upon Theorem
\ref{mainthmRational}.
\end{proof}

Using the periodicity of $\Phi,$ once we have determined that there is one
$x\in\mathbb{Z}_{2}$ that is not a fixed point of $\Phi\operatorname{mod}
2^{k}$ for some $k,$ we obtain infinite families of $2$-adics that are not
fixed points of $\Phi\operatorname{mod}2^{k+mr}.$ While the applicability of
the periodicity of $\Phi$ to the Fixed Points Conjecture is clear, it is not
clear how it can be used to aid in the solution to the original $3x+1$
problem. One potentially interesting area for future study would be to apply
this property to the map $\Omega=\Phi\circ V\circ\Phi^{-1},$ where $V\left(
x\right)  =-1-x$ \cite{MY}. This map makes an intimate connection between
$\Phi$ and the main conjectures associated with the $3x+1$ problem. Such
investigations may shed light on the $3x+1$ problem itself.

\section{Acknowledgments}

This project was funded by a research grant from the University of Scranton.
Many thanks to Kenneth Monks for his many helpful comments, and in particular
for his contributions which led to simpler proofs of Theorems
\ref{infinitevsRational} and \ref{mainthmRational}.

\end{document}